\newtheorem{theorem}{Theorem}
\theoremstyle{definition}
\theoremstyle{remark}
\newcommand{\1}[1]{{\mathbf{1}\mkern -1.5mu}{\{#1\}}}
\newcommand{\R}{{\mathbb R}}
\newcommand{\Z}{{\mathbb Z}}
\newcommand{\geo}[1]{\mathrm{Geom}_0 \left( {#1} \right)}
\newcommand{\IP}{{\mathbb P}}
\newcommand{\IE}{{\mathbb E}}
\title{On transience of $M/G/\infty$ queues}
\author{Serguei Popov\thanks{Centro de Matem\'atica, University of Porto, Porto, Portugal.
 E-mail:
 \texttt{serguei.popov@fc.up.pt}}}
\date{}
\begin{document}
\maketitle

%

\begin{abstract}
 We consider an $M/G/\infty$ queue
 with infinite 
expected service time. We then provide 
the transience/recurrence classification 
of the states (the system is said to be at state~$n$
if there are~$n$ customers being served), 
observing also that here (unlike
e.g.\ irreducible Markov chains) it is possible
for recurrent and transient states to coexist.
We also prove a lower bound on the growth
speed in the transient case.
\\[.3cm]\textbf{Keywords:} transience, recurrence,
service time, heavy tails
\\[.3cm]\textbf{AMS 2020 subject classifications:}
60K25, 60G55
\end{abstract}

In this note we consider a classical $M/G/\infty$
queue (see e.g.~\cite{Newell66}):
the customers arrive according to a Poisson
process with rate~$\lambda$; upon arrival, a customer
immediately enters to service, and the service times
are i.i.d.\ (nonnegative) 
random variables with some general distribution. 
For notational convenience, 
let~$S$ be a generic random variable 
with that distribution. 
We also assume that at time~$0$
there are no customers being served.
Let us denote by~$Y_t$ the number of customers
in the system at time~$t$, which we also refer to 
as the \emph{state of the system} at time~$t$;
note that, in general,
$Y$ is not a Markov process.

We are mainly interested in the situation where 
the system is \emph{unstable}, i.e., when $\IE S = \infty$.
In this situation, in principle,
our intuition tells us that the system 
can be \emph{transient} (in the sense $Y_t\to \infty$ a.s.)
or recurrent (i.e., all states are visited infinitely
often a.s.). 
However, it turns out that, for this model, the complete picture
is more complicated:
\begin{theorem}
\label{t_MGinfty_rec_trans}
Define
\begin{equation}
\label{df_k_0}
 k_0 = \min\Big\{k\in\Z_+ : 
 \int_0^\infty \big(\IE(S\wedge t)\big)^k
 \exp\big(-\lambda \IE(S\wedge t)\big)\, dt = \infty
  \Big\}
\end{equation}
(with the convention $\min\emptyset=+\infty$).
 Then
\begin{equation}
\label{eq_MGinfty_liminf}
\liminf_{t\to\infty}Y_t = k_0 \quad \text{a.s.}.
\end{equation}
In particular, if
\begin{equation}
\label{cond_MGinfty_trans}
 \int_0^\infty \big(\IE(S\wedge t)\big)^k
 \exp\big(-\lambda \IE(S\wedge t)\big)\, dt < \infty
\qquad \text{ for all }k\geq 0,
\end{equation}
then the system is transient;
if 
\begin{equation}
\label{cond_MGinfty_rec}
 \int_0^\infty 
 \exp\big(-\lambda \IE(S\wedge t)\big)\, dt = \infty,
\end{equation}
then 
the system is recurrent.
\end{theorem}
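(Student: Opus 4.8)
\emph{Sketch of the intended argument.} The plan starts from the classical picture of the $M/G/\infty$ queue as a Poisson point process: represent the customers as a Poisson process $\Pi$ on $\R_+\times\R_+$ with intensity $\lambda\,du\otimes\IP(S\in dv)$, a point $(u,v)$ standing for a customer arriving at time $u$ with service time $v$, so that $Y_t=\#\{(u,v)\in\Pi:u\le t<u+v\}$. By thinning, for each fixed $t$ the variable $Y_t$ is Poisson with mean $\mu(t):=\lambda\,\IE(S\wedge t)=\lambda\int_0^t\IP(S>r)\,dr$, where $\mu$ is continuous, nondecreasing, $\mu(0)=0$ and $\mu(t)\to\lambda\,\IE S=\infty$ by instability. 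Since $\IP(Y_t=k)=e^{-\mu(t)}\mu(t)^k/k!$, the integral in \eqref{df_k_0} is a constant multiple of $\int_0^\infty\IP(Y_t=k)\,dt$, so $k_0=\min\{k\in\Z_+:\int_0^\infty\IP(Y_t=k)\,dt=\infty\}$; and since $\mu(t)^n\ge\mu(t)^{k_0}$ once $\mu(t)\ge1$, i.e.\ for all large $t$, in fact $\int_0^\infty\IP(Y_t=n)\,dt=\infty$ for \emph{every} $n\ge k_0$. I would then prove two things: (i) $\liminf_t Y_t\ge k_0$ a.s.; (ii) for every $n$ with $\int_0^\infty\IP(Y_t=n)\,dt=\infty$, a.s.\ $Y_t=n$ for arbitrarily large $t$. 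Applying (ii) with $n=k_0$ gives $\liminf_t Y_t\le k_0$, hence \eqref{eq_MGinfty_liminf}, and the two displayed corollaries will follow at once.

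For (i), fix $k<k_0$, so $\int_0^\infty\mu(t)^k e^{-\mu(t)}\,dt<\infty$. For $j\in\N$ let $Z_j=\#\{(u,v)\in\Pi:u\le j,\ u+v>j+1\}$ count the customers present throughout $[j,j+1]$; then $Z_j\le Y_t$ for all $t\in[j,j+1]$, and $Z_j$ is Poisson with mean $m_j:=\mu(j+1)-\mu(1)\ge\mu(j)-\lambda$. Once $m_j\ge 2k$ the top term dominates the Poisson lower tail, so $\IP(Z_j\le k)\le 2m_j^k e^{-m_j}/k!$, and as $x\mapsto x^k e^{-x}$ is decreasing on $(k,\infty)$ this is at most $(2e^\lambda/k!)\,\mu(j)^k e^{-\mu(j)}$ for $j$ large. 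Since $j\mapsto\mu(j)^k e^{-\mu(j)}$ is eventually nonincreasing and its integral converges, $\sum_j\IP(Z_j\le k)<\infty$, so by Borel--Cantelli $Z_j>k$ for all large $j$ a.s., whence $Y_t>k$ for all large $t$; intersecting over $k\in\{0,\dots,k_0-1\}$ (over all $k$ if $k_0=\infty$) gives $\liminf_t Y_t\ge k_0$.

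For (ii), fix $n$ with $\int_0^\infty\IP(Y_t=n)\,dt=\infty$ and set $L_T=\int_0^T\2{\{Y_t=n\}}\,dt$, $\ell_T=\IE L_T=\int_0^T\IP(Y_t=n)\,dt\uparrow\infty$. For $s<t$ split $Y_t=A_{s,t}+B_{s,t}$ according to whether a customer present at $t$ arrived by time $s$ or in $(s,t]$; here $B_{s,t}=\#\{(u,v)\in\Pi:s<u\le t<u+v\}$ is Poisson$(\mu(t-s))$ and, being a function of $\Pi$ on $\{s<u\le t\}$, is independent of $\{Y_s=n\}$, a function of $\Pi$ on $\{u\le s\}$. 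As $Y_t\ge B_{s,t}$, whenever $\mu(t-s)\ge 2n$ one gets $\IP(Y_s=n,Y_t=n)\le\IP(Y_s=n)\IP(B_{s,t}\le n)\le 2\,\IP(Y_s=n)\IP(Y_{t-s}=n)$, and for the remaining bounded range of $t-s$ I would use the crude bound $\IP(Y_s=n,Y_t=n)\le\IP(Y_s=n)$. Feeding these into $\IE L_T^2=2\int_0^T\!\!\int_0^t\IP(Y_s=n,Y_t=n)\,ds\,dt$ and substituting $r=t-s$ gives $\IE L_T^2\le C\ell_T^2$ for all large $T$ (each surviving factor integrates to at most $\ell_T$, plus an $O(\ell_T)$ term from the bounded range), so Paley--Zygmund yields $\IP(L_T\ge\ell_T/2)\ge 1/(4C)$ for large $T$ and hence $\IP(\int_0^\infty\2{\{Y_t=n\}}\,dt=\infty)\ge 1/(4C)>0$. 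To upgrade this to probability $1$: for each $T$ the process $(Y_t)_{t\ge T}$ depends only on $\Pi$ restricted to $D_T:=\{(u,v):u+v>T\}$, so the event $\{\int_0^\infty\2{\{Y_t=n\}}\,dt=\infty\}=\{\int_T^\infty\2{\{Y_t=n\}}\,dt=\infty\}$ lies in $\bigcap_{T\ge0}\sigma(\Pi|_{D_T})$, and since $\Pi$ is Poisson while $(\R_+\times\R_+)\setminus D_T$ exhausts the plane, this intersection is $\IP$-trivial; hence the probability is $1$, so $\{t:Y_t=n\}$ has infinite Lebesgue measure, hence is unbounded, a.s.

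Combining (i) with (ii) at $n=k_0$ proves \eqref{eq_MGinfty_liminf}. If \eqref{cond_MGinfty_trans} holds then $k_0=\infty$ and \eqref{eq_MGinfty_liminf} reads $Y_t\to\infty$ a.s., i.e.\ transience; if \eqref{cond_MGinfty_rec} holds then $k_0=0$, so $\int_0^\infty\IP(Y_t=n)\,dt=\infty$ for every $n\ge0$ and (ii) makes each state $n$ visited infinitely often a.s., i.e.\ the system is recurrent. The main obstacle is the closing step of (ii): $Y$ is not Markov and has no independence of its own, so moving from positive probability to probability $1$ must go through the $0$--$1$ law for the Poisson process $\Pi$, and the delicate point is to verify that the relevant event really sits in the tail $\sigma$-algebra $\bigcap_T\sigma(\Pi|_{D_T})$ --- note that $D_T$ shrinks to a null set but its complement grows to the whole plane, which is precisely why customers with arbitrarily long service times do not spoil the argument. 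The second-moment bound $\IE L_T^2\le C\ell_T^2$ also requires mild care, as $\IP(Y_t=n)$ fails to be integrable in $t$; it goes through because the pair-correlation $\IP(Y_s=n,Y_t=n)$ factorises, up to a constant, into $\IP(Y_s=n)$ times a function of $t-s$ whose integral over $[0,T]$ is again $\ell_T$.
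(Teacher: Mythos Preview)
Your argument is correct and complete (up to routine details), but it follows a genuinely different route from the paper's. Both proofs start from the same Poisson-point-process picture, identify $\int_0^\infty \IP(Y_t=k)\,dt$ as the decisive quantity, and invoke a $0$--$1$ law (the paper does so at the outset, declaring $\{\liminf Y_t=j\}$ a tail event so that $\liminf Y_t$ is a.s.\ constant; you do so at the end of~(ii), via the tail $\sigma$-field $\bigcap_T\sigma(\Pi|_{D_T})$). The two halves of the argument, however, are handled quite differently.

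For $\liminf Y_t\ge k_0$, the paper works directly with the level sets $T_k=\{t:Y_t=k\}$: if $\IE|T_k|<\infty$ then $|T_0|,\dots,|T_k|$ are a.s.\ finite, and since $T_0$ is a union of i.i.d.\ $\mathrm{Exp}(\lambda)$ idle intervals, finite measure forces boundedness; once $T_0$ is exhausted the remaining part of $T_1$ inherits the same structure, and one inducts up to~$T_k$. You instead discretise time, lower-bound $\inf_{t\in[j,j+1]}Y_t$ by the Poisson count $Z_j$ of customers present throughout $[j,j+1]$, and finish by Borel--Cantelli plus the integral test.

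For $\liminf Y_t\le k_0$, the paper argues by contraposition: if $\{0,\dots,k\}$ were transient, pick $h$ with $\IP(Y_t\ge k+1\text{ for all }t\ge h)\ge\tfrac12$; since the system at time $nh$ dominates a fresh start, a coin-tossing/restart argument bounds $|T_0|+\cdots+|T_k|$ by $h$ times a $\mathrm{Geom}(\tfrac12)$ variable, giving $\IE|T_k|<\infty$ and contradicting $k=k_0$. You go the direct way, running a second-moment (Paley--Zygmund) argument on $L_T=\int_0^T\2{\{Y_t=n\}}\,dt$ using the factorisation of $\IP(Y_s=n,Y_t=n)$ through the independent increment $B_{s,t}$, and then upgrading positive probability to probability~$1$ via the Poisson tail $0$--$1$ law.

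The paper's route is shorter and exploits the specific $M/G/\infty$ structure (exponential idle periods, pathwise monotone coupling). Your route is more computational but relies only on generic point-process and moment tools, so it would transplant more readily to variants where those queueing features are unavailable.
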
 

Before proving this result, 
we make the following remark. 
Let us define~$M(t)$
to be the maximal remaining service time
of the customers which are present at time~$t$.
This is a so-called 
\emph{extremal shot noise process},
see~\cite{FouYua23} and references therein.
It is not difficult to obtain that transience
of~$M(\cdot)$ is the same as transience of
state~$0$ in $M/G/\infty$; then, Theorem~2.5
of~\cite{FouYua23} provides a criterion
for the transience of~$M(\cdot)$
(and therefore for the transience of state~$0$
in our situation). 

\begin{proof}[Proof of Theorem~\ref{t_MGinfty_rec_trans}.]
We start with a simple observation:
for any~$j\geq 0$, $\{\liminf Y_t =j\}$ is a tail event,
so it has probability~$0$ or~$1$. This implies 
that $\liminf Y_t$ is a.s.\ a constant 
(which may be equal to~$+\infty$).

 We use the following representation of the process
(see Figure~\ref{f_MGinfty}): consider a 
Poisson process in~$\R_+^2$, with the intensity 
measure $\lambda\, dt \times dF_S(u)$, 
where $F_S(u)=\IP[S\leq u]$ is the distribution function of~$S$.
Then, a point $(t,u)$ of this Poisson process is interpreted
in the following way: a customer arrived at time~$t$
and the duration of its service will be~$u$. Now,
draw a (dotted) line in the SE direction from each point,
as shown on the picture;
as long as this line stays in~$\R_+^2$, the corresponding
customer is present in the system. If we draw a vertical
line from~$(t,0)$ in the upwards direction,
then the number
of dotted lines it intersects is equal to~$Y_t$.
\begin{figure}
\begin{center}
\includegraphics{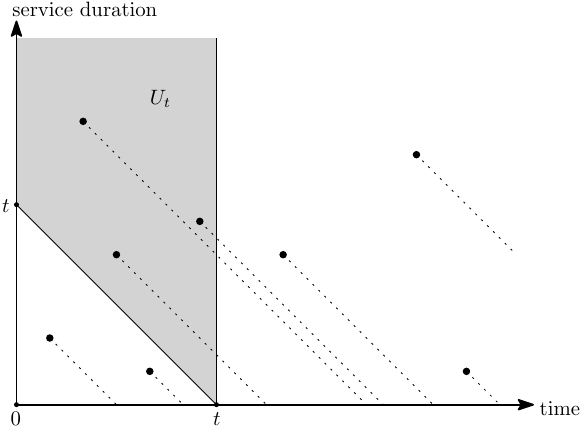}
\caption{A Poisson representation of $M/G/\infty$.
In this example, there are \emph{exactly} three customers
at time~$t$.}
\label{f_MGinfty}
\end{center}
\end{figure}

Next, for $k\in\Z_+$ denote by
\[
 T_k:=\{t\geq 0:Y_t=k\} 
\]
the set of time moments when the system has exactly~$k$
customers, and let 
\[
 U_{t} = \big\{(s,u)\in\R_+^2: s\in[0,t],
  u\geq t-s\big\}.
\]
We note that~$Y_t$ equals the number of points
in~$U_{t}$, which has Poisson distribution 
with mean
\[
 \int_{U_{t}}\lambda \, dt\, dF_S(u)
  = \lambda \IE(S\wedge t).
\]
Therefore, by Fubini's theorem, we have
(here, $|A|$ stands for the Lebesgue measure 
of~$A\subset \R$)
\begin{equation}
\label{E|T_k|}
\IE |T_k| = \IE \int_0^{\infty} \1{Y_t=k}\, dt
= \frac{\lambda^k}{k!}
\int_0^{\infty} \big(\IE(S\wedge t)\big)^k
\exp\big(-\lambda\IE(S\wedge t)\big)\, dt.
\end{equation}

Now, assume that $\IE|T_k|<\infty$ for some~$k\geq 0$;
it automatically implies that $\IE|T_\ell|<\infty$
for $0\leq\ell\leq k$. 
This means that $|T_0|,\ldots,|T_k|$
are a.s.\ finite, and let us show that $T_0,\ldots,T_k$
have to be a.s.\ bounded (this is a small technical
issue that we have to resolve because we are considering
continuous time). Probably, the cleanest way to see this
is the following:
first, notice that, in fact, $T_0$ is a union of intervals 
of random i.i.d.\ (with Exp($\lambda$)
distribution) lengths,
because each time when the system becomes empty, it 
will remain so till the arrival of the next customer.
Therefore,
$|T_0|<\infty$ clearly means that~$\sup T_0\leq K_0$ 
for some (random)~$K_0$. Now, \emph{after}~$K_0$ 
there are no~$1\to 0$ transitions anymore, so
the remaining part of~$T_1$ again
becomes a union of such intervals, meaning that
it should be bounded as well; we then repeat this 
reasoning a suitable number of times to finally
obtain that~$T_k$ must be a.s.\ bounded. 
This implies that 
$\liminf_{t\to \infty} Y_t \geq k_0$ a.s..

Next, assume that $\{0,\ldots,k\}$ is 
a \emph{transient set},
in the sense that $\liminf_{t\to \infty} Y_t \geq k+1$ a.s.; let us show that this implies 
that $\IE|T_k|<\infty$.
Indeed, first,
we can choose a sufficiently large~$h>0$
in such a way that 
\[
 \IP\big[Y_t\geq k+1 \text{ for all }t\geq h\big] 
\geq \frac{1}{2}.
\]
Define a stopping time $\tau=\inf\{t\geq h : Y_t\leq k\}$ 
(again, with the convention $\inf\emptyset = +\infty$).
Then, a crucial observation is that what one sees
after~$\tau$ is a superposition of two \emph{independent}
systems: one is formed by those customers
(with their remaining lifetimes) present at~$\tau$,
and the other is a copy of the original system.
Then, a simple coin-tossing argument
together
with the fact that an initially nonempty system
(i.e., with 
some customers being served, with any assumptions on
their remaining service times) 
dominates an initially
empty system
show that~$|T_k|$ (in fact, $|T_0|+\cdots+|T_k|$)
is dominated by $h\times\geo{\frac{1}{2}}$
random variable and therefore
has a finite expectation. It means that we have
$\liminf_{t\to \infty} Y_t \leq k_0$ 
a.s.\ (because otherwise, 
in the situation when $k_0<\infty$,
we would have $\IE|T_{k_0}|<\infty$,
which, by definition, is not the case).
This concludes the proof of
Theorem~\ref{t_MGinfty_rec_trans}.
\end{proof}

Regarding this result, we may observe that,
in most situations one would have~$k_0=0$ or~$+\infty$;
this is because convergence of such integrals
is usually determined by what is in the exponent.
Still, it is not difficult to construct ``strange examples''
with $0<k_0<\infty$, i.e., where the process will
visit $\{0,\ldots, k_0-1\}$ only finitely many times,
but will hit every~$k\geq k_0$ infinitely often 
a.s.\ (a behaviour one cannot have with 
irreducible Markov chains).
For instance, let $\lambda=1$ and fix $b>0$;
next,
consider a service time distribution such that
 $1-F_S(u)=\frac{1}{u}+\frac{b}{u\ln u}$ 
 for large enough~$u$.
Then it is elementary to obtain
that $\IE(S\wedge t) = \ln t + b \ln\ln t + O(1)$
and the integrals in~\eqref{df_k_0} diverge
whenever $k\geq b-1$, 
meaning that~$k_0 = \lceil b \rceil-1$.

Now, in the situation
when~\eqref{cond_MGinfty_trans} holds and~$Y$
is transient, it may also be useful
to be able to say something about the speed
of convergence of~$Y_t$ to infinity.
We do not 
intend to enter deeply into this question here,
but only prove a particular
result needed for future reference.
Namely, in~\cite{MPW24} we work with a different
model which in some sense dominates $M/G/\infty$;
so, we will now give a lower bound on the growth
of~$Y_t$,
more specifically, we will show that under certain
conditions~$Y_t$ will be eventually at least
a constant fraction of its expected value.
For $q\in(0,1)$, let us define 
$\gamma_q = 1 - q - q\ln q^{-1} > 0$.

\begin{theorem}
\label{t_growth_Y} 
Fix $q\in(0,1)$ and assume that
\begin{equation}
\label{cond_growth_Y}
\int_0^\infty \exp\big(-\gamma_q\lambda \IE(S\wedge t)\big)\, dt < \infty.
\end{equation}
Then 
\begin{equation}
\label{eq_growth_Y}
\IP\big[Y_t\geq q\lambda \IE(S\wedge t)
\text{ for all large enough }t\big]=1.
\end{equation}
\end{theorem}

\begin{proof}
 Let 
\[
 H_q = \big\{t\geq 0: 
 Y_t < q\lambda \IE(S\wedge t) \big\};
\]
our goal is to show that~$H_q$ is a.s.\ bounded
in the case when~\eqref{cond_growth_Y} holds.
 We recall a standard (Chernoff) tail bound: 
if~$X$ is Poisson($\mu$) and $q\in(0,1)$, then
\begin{equation}
\label{Chernoff_Poisson}
 \IP[X\leq q\mu] 
\leq \exp\big(-(q\mu\ln q + \mu-q\mu)\big)
= \exp(-\gamma_q \mu).
\end{equation}
Then, analogously to~\eqref{E|T_k|}
we obtain from~\eqref{Chernoff_Poisson} that
\begin{equation}
\label{est_E|H_q|}
 \IE |H_q| \leq 
 \int_0^\infty \exp\big(-\gamma_q\lambda \IE(S\wedge t)\big)\, dt;
\end{equation}
so, by~\eqref{cond_growth_Y},
we have $\IE|H_q|<\infty$, meaning that $|H_q|<\infty$
a.s..
To see that this has to imply that~$H_q$
is a.s.\ bounded, 
analogously to the proof of
Theorem~\ref{t_MGinfty_rec_trans}
one can reason in the following
way. If $t\in H_q$, then $s\in H_q$ for all 
$s\in (t, A_t)$, where~$A_t$ is the first moment
after~$t$ when a customer arrives to the system.
This implies that the lengths of the intervals
that constitute~$H_q$ dominate a sequence
of i.i.d.\ random variables with Exp($\lambda$)
distribution; by its turn, this clearly
implies that if~$|H_q|$ is finite then it has
to be bounded.
\end{proof}

\end{document}